\newtheorem{theorem}{Theorem}
\newtheorem{cor}[theorem]{Corollary}
\newtheorem{rmk}[theorem]{Remark}
\newtheorem{expl}[theorem]{Example}
\newtheorem{conj}[theorem]{Conjecture}
\def \n{\noindent }
\def \bs{\bigskip}
\def \R{\mathbb R}
\def \Z{\mathbb Z}
\def \H{\mathcal H}
\def \S{\mathcal S}
\def \W{\mathcal W}
\def \O{\mathcal O}
\def \gc{\mathcal G}
\def \F{\mathcal F}
\def \ld{\lambda}
\def \sg{\sigma}
\def \al{\alpha}
\def \de{\delta}
\def \ta{\tau}
\def \bal{\boldsymbol\al}
\def \ssb{\sqsubseteq}
\def \w{{\bf w}}
\def \e{{\bf e}}
\def \a{{\bf a}}
\def \b{{\bf b}}
\def \x{{\bf x}}
\def \y{{\bf y}}
\def \z{{\bf z}}
\def \w{{\bf w}}
\def \e{{\bf e}}
\def \g{{\bf g}}
\def \o{{\bf 0}}
\def \conv{{\rm conv}}
\begin{document}
%==========================================================================
\title
[Geometry of the minimal solutions  of a linear Diophantine Equation]
{Geometry of the minimal solutions of a linear Diophantine Equation}
\author{Papa A. Sissokho}
\address{4520 Mathematics Department\\Illinois State University
\\ Normal, Illinois 61790--4520, U.S.A.}
\date{\today}
\keywords{Linear Diophantine Equation; minimal solutions;
 convex combination; Hilbert basis; Graver basis;  primitive partition identity.}
%===================================================================
%
\begin{abstract}
Let $a_1,\ldots,a_n$ and $b_1,\ldots,b_m$ be fixed positive integers, and let $\S$ denote the set of all nonnegative integer solutions of the equation $x_1a_1+\ldots +x_na_n=y_1b_1+\ldots +y_mb_m$. A solution $(x_1,\ldots,x_n,y_1,\ldots,y_m)$ 
in $\S$ is called {\em minimal} if it cannot be expressed as the sum of two nonzero solutions in $\S$.  
For each pair $(i,j)$ with $1\leq i\leq n$ and $1\leq j\leq m$, the  solution 
whose only nonzero coordinates are $x_i=b_j$ and $y_j=a_i$ is called a {\em generator}. 
Our main result shows that every minimal solution is a convex combination of the generators and the zero-solution. 
This proves a conjecture of Henk--Weismantel and, independently,  Hosten--Sturmfels. 
\end{abstract}
\maketitle
%MSC: 11H06, 52C07, 52B20
%=======================================================
\section{Introduction and main results}
%=======================================================
%=======================================================
\subsection{Introduction}
%=======================================================
For any integer $t$, let $\Z_{\geq t}\coloneqq\{x\in\Z:\; x\geq t\}$ and define $\R_{\geq t}$ in a similar manner. Given two vectors $\a=(a_1,\ldots,a_n)\in\Z_{\geq1}^n$ and $\b=(b_1,\ldots,b_m)\in\Z_{\geq1}^m$, let $\S(\a,\b)$ denote the set of all solutions $(\x,\y)\in \Z_{\geq0}^n\times \Z_{\geq0}^m$, with $\x=(x_1,\ldots,x_n)$ and $\y=(y_1,\ldots,y_m)$, of the linear Diophantine equation 
\begin{equation}\label{eq:lde}
x_1a_1+\ldots +x_na_n=y_1b_1+\ldots+y_mb_m.
\end{equation}
For any nonzero solution $(\x,\y)\in\S(\a,\b)$, define
\begin{equation}\label{eq:supp}
{\rm supp}(\x):=\{i:\, x_i>0,\; 1\leq i\leq n\}; \; 
{\rm supp}(\y):=\{j:\, y_j>0,\; 1\leq j \leq m\}.
\end{equation}

A solution is called {\em minimal} if it cannot be 
written as the sum of two nonzero solutions in $\S(\a,\b)$. The set of all minimal solution of~\eqref{eq:lde}, denoted by $\H(\a,\b)$, is the 
{\em Hilbert basis} of the pointed rational cone 
\[\{(\x,\y)\in \R_{\geq0}^{n}\times\R_{\geq0}^m:\; \a\cdot \x=\b\cdot \y\},\]
where $\a\cdot\x$ denotes the dot product of $\a$ and $\x$ (similarly for $\b\cdot\y$).

For any integer $k$ with $1\leq k\leq n+m$, 
let $\e_k$ denote the $k$th standard unit vector of $\R^{n+m}$.   If $1\leq i\leq n$ and $1\leq j\leq m$, then $\g_{i,j}=(b_j\e_i,a_i\e_{n+j})$ is a solution of~\eqref{eq:lde} called {\em generator}. A generator $\g_{i,j}$ is minimal if and only if $\gcd(a_i,b_j)=1$. In particular, if $d_{i,j}=\gcd(a_i,b_j)$, then $(1/d_{i,j})\cdot \g_{i,j}$ is minimal.
Let $\W(\a,\b)$ denote the convex hull of the 
zero-solution and the elementary solutions $\g_{i,j}$, 
i.e., 
\begin{equation}\label{eq:cvx}
\W(\a,\b)={\rm conv}\big( \{\o\}\cup\left\{\g_{i,j}:\,\mbox{$1\leq i\leq n$, $1\leq j\leq m$}\right\}\big).
\end{equation}
The following conjecture was made by Henk and Weismantel~\cite[Conjecture~$1$]{HW}, and, independently\footnote{This information was given by Henk and Weismantel~\cite[Page 54]{HW}}, by Hosten and Sturmfels.
\begin{conj}\label{conj:1}
If $\a\in\Z_{\geq1}^n$ and $\b\in\Z_{\geq1}^m$, then $\H(\a,\b)\subseteq \W(\a,\b)$. 
\end{conj}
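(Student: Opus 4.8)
The plan is to show that every minimal solution $(\x,\y)\in\H(\a,\b)$ lies in the polytope $\W(\a,\b)$ by exhibiting it as an explicit convex combination of the generators $\g_{i,j}$ and the origin $\o$. The natural starting point is to interpret a solution $(\x,\y)$ combinatorially: equation~\eqref{eq:lde} says that a multiset of ``left tokens'' (with $x_i$ copies of value $a_i$) and a multiset of ``right tokens'' (with $y_j$ copies of value $b_j$) have the same total sum $N=\a\cdot\x=\b\cdot\y$. I would think of this as a bipartite transportation/flow problem: build a bipartite multigraph whose left vertices are the left tokens and whose right vertices are the right tokens, and route $N$ units of ``mass'' so that each left token of value $a_i$ ships out exactly $a_i$ units and each right token of value $b_j$ receives exactly $b_j$ units. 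Such a routing exists (e.g.\ greedily), and each unit edge between a left token of type $i$ and a right token of type $j$ is, after scaling, a copy of $\g_{i,j}$ divided by $a_ib_j$. The first key step is to make this precise and thereby write $(\x,\y)$ as a \emph{nonnegative rational combination} (not yet convex) of the $\g_{i,j}$'s, with coefficients that record how much mass flows along edges of each type.

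The second step is the reduction to convexity, and this is where minimality must enter. A generic solution need not lie in $\W(\a,\b)$ — only minimal ones do — so the flow decomposition above must be chosen to be ``efficient.'' Concretely, I would argue that if $(\x,\y)$ is minimal then the associated bipartite flow can be taken to be \emph{acyclic} (a forest): a cycle in the support of the flow would let us cancel mass around it, and — crucially — pushing mass all the way around a cycle and splitting off the resulting piece would express $(\x,\y)$ as a sum of two nonzero solutions in $\S(\a,\b)$, contradicting minimality. Once the flow is supported on a forest, the number of edges is at most (number of left tokens)$+$(number of right tokens)$-1 = (\sum x_i)+(\sum y_j)-1$, and I would use this sparsity, together with the fact that total outgoing mass is $N$, to control the sum of the coefficients in the decomposition. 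The target is to show the coefficients $\ld_{i,j}$ (summing mass of type-$(i,j)$ edges, rescaled by $1/(a_ib_j)$ so that each term is a genuine $\g_{i,j}$) satisfy $\sum_{i,j}\ld_{i,j}\le 1$; the remaining slack $1-\sum\ld_{i,j}$ is then assigned to the vertex $\o$, placing $(\x,\y)$ in $\W(\a,\b)$.

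The main obstacle I anticipate is precisely this last inequality $\sum_{i,j}\ld_{i,j}\le1$: bounding the total coefficient mass requires squeezing every bit of structure out of minimality, and a naive forest bound is not obviously enough. I expect the real work to be a careful amortized or weighting argument on the forest — for instance, charging the coefficient contribution of each edge against the ``value budget'' $a_i$ or $b_j$ of one of its endpoints, and using that distinct tokens of the same type can be merged or that a minimal solution cannot have ``too many'' tokens relative to $N$ (bounds of Pottier/Lambert type on the size of Hilbert basis elements may be relevant here). A secondary technical point is the reduction to $\gcd(a_i,b_j)=1$ for the generators that are themselves minimal: one should check that replacing $\g_{i,j}$ by $(1/d_{i,j})\g_{i,j}$ only \emph{enlarges} $\W(\a,\b)$, so it suffices to prove the inclusion with the unscaled generators, which is what the flow decomposition naturally produces. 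Finally, I would double-check the degenerate cases (some $x_i$ or $y_j$ equal to zero, or $(\x,\y)$ a scalar multiple of a single $\g_{i,j}$) separately, since the forest argument is cleanest when both supports are nonempty.
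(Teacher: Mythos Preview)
Your transportation/flow picture is exactly the right setup, and in fact the paper's proof can be read as carrying out precisely this decomposition with one specific flow. But the proposal, as written, has two concrete problems.

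First, your invocation of minimality is misplaced. Acyclicity of the bipartite flow on the token graph is free: every feasible transportation solution can be reduced to one supported on a forest by standard cycle-cancelling, for \emph{any} $(\x,\y)\in\S(\a,\b)$, minimal or not. Pushing mass around a cycle does not split $(\x,\y)$ into two nonzero solutions; it just produces a different flow for the same $(\x,\y)$. What minimality actually buys you on the token graph is \emph{connectedness}: if an acyclic flow had two components, each component would exhibit a proper sub-partition-identity, contradicting primitivity. So minimal $\Rightarrow$ every basic flow is a tree, not merely a forest. That is a genuine consequence, but it is not the step you described, and by itself it does not give $\sum_{i,j}\ld_{i,j}\le 1$.

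Second --- and you already flag this --- the inequality $\sum_{i,j}\ld_{i,j}\le 1$ is the entire content of the conjecture, and your sketch does not contain a mechanism for proving it. A tree bound on the number of edges only controls the number of nonzero $\ld_{i,j}$'s, not their sum. The paper closes this gap not by an amortized charging argument but by choosing the flow greedily: at each step match the largest remaining left token $a_s$ against the largest remaining right token $b_t$, replace the larger by the difference $a_s-b_t$ (say), and recurse on a strictly shorter primitive identity. With this specific choice one can write the new $\ld_{i,j}$'s explicitly in terms of the inductive $\ld'_{i,j}$'s and compute
\[
\sum_{i,j}\ld_{i,j}-\sum_{i,j}\ld'_{i,j}
=\frac{1}{a_s}\Bigl(1-b_t\sum_{j\neq t}\ld'_{n+1,j}\Bigr)\le 0,
\]
where the last inequality uses $b_t\ge b_j$ for every $j$ still in the support together with $\sum_{j\neq t}\ld'_{n+1,j}b_j=1$. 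The ``largest-against-largest'' rule is thus not a convenience but the crux: it is exactly what makes the total coefficient sum monotone along the induction. Your outline has the right objects but is missing this idea; without it the plan does not go through.
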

For $n=1$ or $m=1$, it was noted in~\cite{HW} that the Conjecture~\ref{conj:1} follows from a theorem of  Lambert~\cite{La} (and, independently, by Diaconis--Graham--Sturmfels~\cite{DGS}), which states that if $(\x,\y)$ is a minimal solution, then 
\begin{equation}\label{eq:Lam}
||\x||_1=\sum_{i=1}^n x_i \leq \max_{1\leq j\leq m} b_j \mbox{ and }
||\y||_1=\sum_{j=1}^my_j\leq \max_{1\leq i\leq n} a_i.
\end{equation}
The above upper bounds have been subsequently  
improved by Henk and Weismantel~\cite{HW}. Currently, 
the best known upper bounds (given in~\cite{S}) are
\begin{equation}\label{eq:Sis}
||\x||_1 \leq \frac{\y\cdot \b}{||\y||_1} \mbox{ and }
||\y||_1 \leq \frac{\x\cdot \a}{||\x||_1}.
\end{equation}

\bs
The rest of the paper is organized as follows. In Section~\ref{sec:main}, we present our main theorem (Theorem~\ref{thm:gen}) whose immediate corollary is the proof of Conjecture~\ref{conj:1}. In Section~\ref{sec:Grv}, we 
use Theorem~\ref{thm:gen} to characterize the {\em Graver Basis} of matrices with a single row. Then in Section~\ref{sec:com}, we apply Theorem~\ref{thm:gen} to 
{\em completely fundamental solutions}. Finally, we discuss the algorithmic nature of the proof of Theorem~\ref{thm:gen} 
and illustrate it with an example.  
%==========================================
\subsection{Main theorem}\label{sec:main}\
%==============================================
 In this section, we prove the following theorem  
and use it to verify Conjecture~\ref{conj:1}.
\begin{theorem}\label{thm:gen}
Let $\a=(a_1,\ldots,a_n)\in\Z_{\geq1}^n$ and $\b=(b_1,\ldots,b_m)\in\Z_{\geq1}^m$. If $(\x,\y)\in\H(\a,\b)$, with $\x=(x_1,\ldots,x_n)$ and $\y=(y_1,\ldots,y_m)$, then   
there exist rational numbers $\ld_{i,j}$ such that
\begin{equation}\label{eq:alij}
\begin{cases}
\displaystyle{\sum\limits_{j=1}^m \ld_{i,j}\cdot b_j=x_i},\;&\mbox{ for $1\leq i\leq n$}\\
\displaystyle{\sum_{i=1}^n\ld_{i,j}\cdot a_i=y_j,\;
}&\mbox{ for $1\leq j\leq m$}\\
\displaystyle{\sum_{i=1}^n\sum_{j=1}^m \ld_{i,j}\leq 1}\\
\ld_{i,j}\geq 0,\;&\mbox{ for $1\leq i\leq n$ and $1\leq j\leq m$}.
\end{cases}
\end{equation}
\end{theorem}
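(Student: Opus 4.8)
The plan is to recast the system \eqref{eq:alij} as a transportation problem — a nonnegative matrix with prescribed row and column sums — and then to control the relevant quantity by an elementary monotonicity (Chebyshev) estimate together with the bound \eqref{eq:Sis}. Write $N=\a\cdot\x=\b\cdot\y$ for the common value in \eqref{eq:lde}; the case $(\x,\y)=\o$ is trivial (take all $\ld_{i,j}=0$), so assume $(\x,\y)\neq\o$, whence $\|\x\|_1,\|\y\|_1\ge1$. If one sets $\ld_{i,j}=\mu_{i,j}/(a_ib_j)$, then the two equalities in \eqref{eq:alij} become the marginal conditions $\sum_j\mu_{i,j}=x_ia_i$ and $\sum_i\mu_{i,j}=y_jb_j$ (consistent because $\sum_i x_ia_i=N=\sum_j y_jb_j$), the sign condition becomes $\mu_{i,j}\ge0$, and $\sum_{i,j}\ld_{i,j}\le1$ becomes $\sum_{i,j}\mu_{i,j}/(a_ib_j)\le1$. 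So it suffices to produce one nonnegative \emph{integer} matrix $\mu=(\mu_{i,j})$ with those marginals and with $\sum_{i,j}\mu_{i,j}/(a_ib_j)\le1$; dividing by $a_ib_j$ will then give rational $\ld_{i,j}$ as required.

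I would build such a $\mu$ from a sorted layout of labelled intervals on $[0,N]$. Place on $[0,N]$, consecutively from left to right, abutting intervals (the ``$a$-blocks'') as follows: taking the indices $i\in\{1,\dots,n\}$ in an order for which $a_i$ is nonincreasing, lay down, for each such $i$, exactly $x_i$ abutting intervals of length $a_i$, each labelled $i$; this tiles $[0,N]$ with $\|\x\|_1$ labelled blocks. Independently, tile $[0,N]$ by abutting ``$b$-blocks'': taking the indices $j$ in an order for which $b_j$ is nondecreasing, lay down, for each $j$, exactly $y_j$ intervals of length $b_j$ labelled $j$; this uses $\|\y\|_1$ blocks. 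Define $\mu_{i,j}$ to be the total length of the set of points of $[0,N]$ lying simultaneously in an $a$-block labelled $i$ and a $b$-block labelled $j$. All endpoints are integers, so $\mu_{i,j}\in\Z_{\ge0}$; and since the overlaps of a fixed $a$-block with the various $b$-blocks partition that block, $\sum_j\mu_{i,j}=x_ia_i$, and symmetrically $\sum_i\mu_{i,j}=y_jb_j$, so the marginals hold.

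To bound $\sum_{i,j}\mu_{i,j}/(a_ib_j)$, pass to the common refinement of the two tilings, with atoms $\kappa_1,\dots,\kappa_L$ listed from left to right; let $w_\ell=|\kappa_\ell|$, and let $\alpha_\ell,\beta_\ell$ be the lengths of the $a$-block and $b$-block containing $\kappa_\ell$. Then $\sum_{i,j}\mu_{i,j}/(a_ib_j)=\sum_\ell w_\ell/(\alpha_\ell\beta_\ell)$. Because the $a$-blocks occur with nonincreasing lengths along $[0,N]$ while the $b$-blocks occur with nondecreasing lengths, the sequence $\ell\mapsto 1/\alpha_\ell$ is nondecreasing and $\ell\mapsto 1/\beta_\ell$ is nonincreasing; applying the weighted Chebyshev sum inequality to these oppositely ordered sequences with weights $w_\ell$ gives
\[\Big(\sum_\ell w_\ell\Big)\Big(\sum_\ell \tfrac{w_\ell}{\alpha_\ell\beta_\ell}\Big)\le\Big(\sum_\ell \tfrac{w_\ell}{\alpha_\ell}\Big)\Big(\sum_\ell \tfrac{w_\ell}{\beta_\ell}\Big).\]
Grouping atoms by the block containing them yields $\sum_\ell w_\ell=N$, $\sum_\ell w_\ell/\alpha_\ell=\|\x\|_1$, and $\sum_\ell w_\ell/\beta_\ell=\|\y\|_1$, so $\sum_{i,j}\mu_{i,j}/(a_ib_j)\le\|\x\|_1\,\|\y\|_1/N$. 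Finally, since $(\x,\y)$ is minimal, the bound \eqref{eq:Sis} from~\cite{S} gives $\|\x\|_1\le(\y\cdot\b)/\|\y\|_1=N/\|\y\|_1$, i.e.\ $\|\x\|_1\,\|\y\|_1\le N$, and hence $\sum_{i,j}\mu_{i,j}/(a_ib_j)\le1$. Then $\ld_{i,j}=\mu_{i,j}/(a_ib_j)$ satisfies all of \eqref{eq:alij}.

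Once the reduction to a transportation matrix is in place the argument is light: the only external input is \eqref{eq:Sis}, and everything else is the routine verification of the marginals plus one elementary inequality. I expect the main obstacle to be conceptual rather than computational — namely recognizing that it is the monotone layout (largest $a$-blocks against smallest $b$-blocks) whose cost $\sum_{i,j}\mu_{i,j}/(a_ib_j)$ is controlled by Chebyshev, and that \eqref{eq:Sis} is precisely the inequality $\|\x\|_1\,\|\y\|_1\le N$ needed to close the gap; absent \eqref{eq:Sis}, proving that inequality for minimal solutions would be the crux. One may also observe that this $\mu$ is the northwest‑corner transportation plan for the Monge cost matrix $\big(1/(a_ib_j)\big)$, so $\sum_{i,j}\ld_{i,j}$ is in fact minimized by this choice, although that optimality is not needed here.
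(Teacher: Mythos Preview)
Your argument is correct and is genuinely different from the paper's. The paper proceeds by induction on $\|\x\|_1+\|\y\|_1$: it first reduces to the case where $(\x,\y)$ is a $\{0,1\}$-vector, then at each step picks the largest coefficients $a_s,b_t$ in the supports, replaces $a_s$ by $a_s-b_t$, and recurses; the coefficients $\ld_{i,j}$ are built up explicitly from the recursion by formulas of the type $\ld_{s,t}=1/a_s$, $\ld_{s,j}=\tfrac{a_s-b_t}{a_s}\ld'_{n+1,j}$, etc. That proof is self-contained (it does not need~\eqref{eq:Sis}) and is visibly algorithmic---the paper in fact turns it into a worked procedure in Section~3. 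Your route instead makes the substitution $\ld_{i,j}=\mu_{i,j}/(a_ib_j)$ to convert~\eqref{eq:alij} into a transportation problem, builds the northwest-corner plan via the monotone interval layout, and controls $\sum\mu_{i,j}/(a_ib_j)$ by the weighted Chebyshev inequality together with~\eqref{eq:Sis} in the equivalent form $\|\x\|_1\,\|\y\|_1\le \a\cdot\x$. This is shorter and more conceptual, and it exposes that the whole theorem is essentially equivalent to the single inequality $\|\x\|_1\,\|\y\|_1\le N$; it also immediately yields at most $n+m-1$ nonzero $\ld_{i,j}$, matching Corollary~\ref{cor:2}. The trade-off is that your proof imports~\eqref{eq:Sis} from~\cite{S} as a black box, whereas the paper's induction avoids any external bound; if one wanted a stand-alone argument along your lines, proving $\|\x\|_1\,\|\y\|_1\le N$ for minimal $(\x,\y)$ would have to be supplied separately.
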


Conjecture~\ref{conj:1} immediately follows from Theorem~\ref{thm:gen} by setting the coefficient of
$\o$ to $1-\sum\limits_{i=1}^n\sum\limits_{j=1}^m\ld_{i,j}$. More precisely, we have the following corollary 
whose (short) proof is in Section~\ref{sec:proof}.
\begin{cor}\label{cor:2}  Every minimal solution $(\x,\y)$ is a convex combination of $\o$ and the generators.
Moreover, one can use up to $m+n-1$  nonzero generators for any such combination.
\end{cor}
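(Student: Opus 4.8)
\smallskip
\noindent\emph{Proof proposal for Corollary~\ref{cor:2}.} The first assertion should follow immediately from Theorem~\ref{thm:gen}. Given $(\x,\y)\in\H(\a,\b)$, I would take the rationals $\ld_{i,j}\geq0$ furnished by~\eqref{eq:alij}, set $\ld_0:=1-\sum_{i=1}^n\sum_{j=1}^m\ld_{i,j}$, which is $\geq0$ by the third line of~\eqref{eq:alij}, and check by comparing coordinates that
\[\ld_0\,\o+\sum_{i=1}^n\sum_{j=1}^m\ld_{i,j}\,\g_{i,j}=(\x,\y),\]
since for $1\leq i\leq n$ the $i$-th coordinate of the right side is $\sum_{j}\ld_{i,j}b_j=x_i$, and for $1\leq j\leq m$ its $(n+j)$-th coordinate is $\sum_{i}\ld_{i,j}a_i=y_j$, by the first two lines of~\eqref{eq:alij}. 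Since $\ld_0+\sum_{i,j}\ld_{i,j}=1$ and every coefficient is nonnegative, this is the required convex combination of $\o$ and the generators.

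For the bound on the number of generators, the plan is to choose the $\ld_{i,j}$ more economically. I would consider the polyhedron
\[P=\Big\{\,\ld=(\ld_{i,j})\in\R_{\geq0}^{nm}\ :\ \sum_{j=1}^m\ld_{i,j}b_j=x_i\ \ (1\leq i\leq n),\quad \sum_{i=1}^n\ld_{i,j}a_i=y_j\ \ (1\leq j\leq m)\,\Big\},\]
and let $A$ denote the $(n+m)\times nm$ coefficient matrix of the equality constraints defining $P$. By Theorem~\ref{thm:gen}, $P$ is nonempty and contains a point with $\sum_{i,j}\ld_{i,j}\leq1$. Since $P$ lies in the nonnegative orthant it is pointed, and the objective $\ld\mapsto\sum_{i,j}\ld_{i,j}$ is bounded below on $P$ by $0$; so by the fundamental theorem of linear programming the program $\min\{\sum_{i,j}\ld_{i,j}:\ \ld\in P\}$ attains its minimum at a basic feasible solution $\ld^\ast$. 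Such a $\ld^\ast$ has at most $\operatorname{rank}(A)$ nonzero coordinates, and it satisfies $\sum_{i,j}\ld^\ast_{i,j}\leq1$ because the feasible point from the theorem already does; hence $\ld^\ast$ satisfies the whole system~\eqref{eq:alij}. Applying the construction of the first paragraph to $\ld^\ast$ then exhibits $(\x,\y)$ as a convex combination of $\o$ and at most $\operatorname{rank}(A)$ generators.

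It remains to show $\operatorname{rank}(A)\leq n+m-1$, and this is the only place where the hypothesis $(\x,\y)\in\S(\a,\b)$ is used. Multiplying the $i$-th equation of the first group of $n$ equations by $a_i$, the $j$-th equation of the second group of $m$ equations by $-b_j$, and summing, the left-hand side vanishes identically in $\ld$, while the right-hand side becomes $\a\cdot\x-\b\cdot\y$, which equals $0$ since $(\x,\y)$ solves~\eqref{eq:lde}. As the weight vector $(a_1,\dots,a_n,-b_1,\dots,-b_m)$ is nonzero, the rows of $A$ are linearly dependent, so $\operatorname{rank}(A)\leq n+m-1$, which finishes the argument.

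The whole proof is short once Theorem~\ref{thm:gen} is in hand; the one point that needs care is that the reduction of the support must be carried out without destroying the inequality $\sum_{i,j}\ld_{i,j}\leq1$, which is exactly why one minimizes $\sum_{i,j}\ld_{i,j}$ over $P$ instead of picking an arbitrary vertex. If one wishes to avoid the language of linear programming, the same outcome can be obtained by an elementary Carath\'eodory-type elimination: whenever a feasible $\ld$ has more than $n+m-1$ positive coordinates, the corresponding generators $\g_{i,j}$ are linearly dependent, because they all lie in the $(n+m-1)$-dimensional subspace $\{(\x,\y):\a\cdot\x=\b\cdot\y\}$; one then moves $\ld$ along such a dependency, in whichever direction does not increase $\sum_{i,j}\ld_{i,j}$, until a coordinate becomes $0$, and iterates.
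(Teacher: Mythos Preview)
Your argument is correct. The first paragraph is exactly how the paper sets things up (the coefficient of $\o$ is taken to be $1-\sum_{i,j}\ld_{i,j}$), and your LP/rank computation for the bound $n+m-1$ is sound: the identity $\sum_i a_i\,(\text{row}_i)-\sum_j b_j\,(\text{row}_{n+j})=0$ indeed shows $\operatorname{rank}(A)\le n+m-1$, and a basic optimal solution of the bounded LP has at most that many nonzeros while still satisfying $\sum_{i,j}\ld_{i,j}\le 1$.

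The route differs from the paper's. There the bound is obtained in two ways: first, by tracing the inductive construction in the proof of Theorem~\ref{thm:gen} and observing that each inductive step introduces exactly one new nonzero coefficient $\ld_{s,t}$, so in total at most $||\x||_1+||\y||_1-1\le n+m-1$ generators appear (after collapsing the binary expansion back to the original solution); second, by triangulating $\W(\a,\b)$ into $(n+m-1)$-simplices through the vertex~$\o$ and invoking Theorem~\ref{thm:gen} to place $(\x,\y)$ in one of them. Your LP argument is a clean self-contained alternative that does not revisit the induction, and your closing Carath\'eodory-elimination remark is essentially the triangulation argument in disguise. What the paper's inductive approach buys is an explicit, algorithmic set of $\ld_{i,j}$'s (cf.\ the worked example), whereas your approach establishes existence via standard polyhedral theory without constructing the coefficients.
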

\begin{rmk}
Carath\'eodory's theorem already predicts a convex combination of $(\x,\y)$ 
with at most $d+1$ vertices from the convex hull $\W(\a,\b)$ given in~\eqref{eq:cvx}, where $d=\dim\,\W(\a,\b)=n+m-1$.
\end{rmk}

\bs It was mentioned in~\cite[Page 54]{HW} that Hosten and Sturmfels found an example for which the convex hull of $\o$ and the minimal generators $\left((b_j/d_{i,j})\e_i,(a_i/d_{i,j})\e_{n+j}\right)$, with $d_{i,j}=\gcd(a_i,b_j)$, does not contain the Hilbert basis $\H(\a,\b)$. Since that ``example'' was not included in their article, we give another example of that fact below.
\begin{expl}\label{exp1}
Let $\a=(6)$ and  $\b=(2,3,5)$. Then 
$d_{1,1}=\gcd(6,2)=2$, $d_{1,2}=\gcd(6,3)=3$, and $d_{1,3}=\gcd(6,5)=1$.  
Consider the minimal solution $(\x,\y)=\left((2),(2,1,1)\right)$ of the equation $6x_1=2y_1+3y_2+5y_3$.
To obtain a convex hull that involves the minimal generators, we need to find a solution $\left(\ld_{1,1},\ld_{1,2},\ld_{1,3}\right)\in\R^{3}$ to the following linear system:
\[
\begin{cases}
\frac{a_1}{d_{1,1}}\ld_{1,1}=y_1 \cr
\frac{a_1}{d_{1,2}}\ld_{1,2}=y_2\cr
\frac{a_1}{d_{1,3}}\ld_{1,3}=y_3\cr
\frac{b_1}{d_{1,1}}\ld_{1,1}+\frac{b_2}{d_{1,2}}\ld_{1,2}+\frac{b_3}{d_{1,3}}\ld_{1,3}=x_1 \cr
\ld_{1,1}+\ld_{1,2}+\ld_{1,3}\leq 1\cr
\ld_{1,1},\, \ld_{1,2},\, \ld_{1,3}\geq 0.
\end{cases}
\Longleftrightarrow
\quad 
\begin{cases}
3\ld_{1,1}=2 \cr
2\ld_{1,2}=1\cr
6\ld_{1,3}=1\cr
\ld_{1,1}+\ld_{1,2}+5\ld_{1,3}=2 \cr
\ld_{1,1}+\ld_{1,2}+\ld_{1,3}\leq 1\cr
\ld_{1,1},\, \ld_{1,2},\, \ld_{1,3}\geq 0.
\end{cases}
\]
The only solution to the subsystem composed by the first four equations is $(\ld_{1,1},\ld_{1,2},\ld_{1,3})=(\frac{2}{3},\frac{1}{2},\frac{1}{6})$. Thus, $\ld_{1,1}+\ld_{1,2}+\ld_{1,3}=\frac{4}{3}$, which violates the fifth constraint.
\end{expl}

\bs However, in contrast to the situation illustrated in Example~\ref{exp1}, we have the following interesting corollary. 
\begin{cor}\label{cor:added1}  If $\gcd(a_i,b_j)=1$ for all $1\leq i\leq n$ and $1\leq j\leq m$, then the 
generators $\g_{i,j}=(b_j\e_i,a_i\e_{n+j})$ are all minimal and the set of extreme points of  $\conv\left(\H(\a,\b)\right)$ 
is 
\[ \{\o\}\cup\left\{\g_{i,j}:\,\mbox{$1\leq i\leq n$, $1\leq j\leq m$}\right\}.\]
\end{cor}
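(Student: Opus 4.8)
The plan is to identify the polytope $\conv\big(\H(\a,\b)\big)$ exactly and then read off its vertices. The first assertion of the corollary is immediate from the remark preceding Conjecture~\ref{conj:1}: a generator $\g_{i,j}$ is minimal precisely when $\gcd(a_i,b_j)=1$, so under the hypothesis every $\g_{i,j}$ belongs to $\H(\a,\b)$. I would then show $\conv\big(\H(\a,\b)\big)=\W(\a,\b)$. By Corollary~\ref{cor:2}, every minimal solution is a convex combination of $\o$ and the generators, so $\H(\a,\b)\subseteq\W(\a,\b)$ and hence $\conv\big(\H(\a,\b)\big)\subseteq\W(\a,\b)$. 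Conversely, $\o\in\H(\a,\b)$ (the zero-solution cannot be written as a sum of two nonzero solutions) and, by the first assertion, each $\g_{i,j}\in\H(\a,\b)$; thus $\{\o\}\cup\{\g_{i,j}\}\subseteq\H(\a,\b)$, and taking convex hulls gives $\W(\a,\b)\subseteq\conv\big(\H(\a,\b)\big)$. So $\conv\big(\H(\a,\b)\big)=\W(\a,\b)$, and the task reduces to showing that the vertex set of $\W(\a,\b)$ is exactly $\{\o\}\cup\{\g_{i,j}\}$.

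For that, since $\W(\a,\b)$ is the convex hull of the finite set $\{\o\}\cup\{\g_{i,j}\}$, its vertices lie in that set, so I only need to check that none of these points is a convex combination of the remaining ones. I would argue by supports. If $\o$ were a convex combination $\sum_{k,l}\mu_{k,l}\g_{k,l}$ with some $\mu_{k,l}>0$, its $k$th coordinate would be at least $\mu_{k,l}b_l>0$, a contradiction; hence $\o$ is a vertex. For a fixed $(i,j)$, suppose $\g_{i,j}=\mu_0\o+\sum_{(k,l)\ne(i,j)}\mu_{k,l}\g_{k,l}$ is a convex combination. Comparing coordinate $k$ for each $k\ne i$ forces $\mu_{k,l}=0$ for all $l$ (that coordinate of $\g_{i,j}$ is $0$, while every $b_l>0$), and comparing coordinate $n+l$ for each $l\ne j$ forces $\mu_{k,l}=0$ for all $k$; together these annihilate every $\mu_{k,l}$, leaving $\g_{i,j}=\mu_0\o=\o$, which is absurd since $\g_{i,j}$ has a positive coordinate. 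Hence every $\g_{i,j}$ is a vertex, and the extreme points of $\conv\big(\H(\a,\b)\big)=\W(\a,\b)$ are precisely $\{\o\}\cup\{\g_{i,j}\}$.

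The hypothesis $\gcd(a_i,b_j)=1$ is used only in placing the generators inside $\H(\a,\b)$; without it they need not be minimal, and Example~\ref{exp1} shows the conclusion can genuinely fail. I do not expect a serious obstacle in this corollary: once Theorem~\ref{thm:gen} (via Corollary~\ref{cor:2}) is available, the equality $\conv\big(\H(\a,\b)\big)=\W(\a,\b)$ and the vertex computation are formal, and the only point needing care is the coordinate bookkeeping in the second paragraph.
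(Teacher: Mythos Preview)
Your proof is correct and follows essentially the same route as the paper: both establish $\conv\big(\H(\a,\b)\big)=\W(\a,\b)$ by combining Corollary~\ref{cor:2} with the observation that the gcd hypothesis forces $\{\o\}\cup\{\g_{i,j}\}\subseteq\H(\a,\b)$, and then read off the extreme points. The only difference is that the paper disposes of the vertex identification in one sentence (invoking ``minimality'' of $\o$ and the $\g_{i,j}$), whereas you give an explicit support/coordinate argument showing that no $\g_{i,j}$ (nor $\o$) is a convex combination of the remaining points; your version is more careful here, since minimality in the Hilbert-basis sense does not by itself say anything about extremality in the polytope.
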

\begin{proof}
Since $\H(\a,\b)$ is the set of minimal solutions, it follows from the first part of Corollary~\ref{cor:2} that
\[ \H(\a,\b)\subseteq {\rm conv}\big( \{\o\}\cup\left\{\g_{i,j}:\,\mbox{$1\leq i\leq n$, $1\leq j\leq m$}\right\}\big),
\]
which implies that 
\begin{equation}\label{eq:hb1}
\conv\left(\H(\a,\b)\right)\subseteq {\rm conv}\big( \{\o\}\cup\left\{\g_{i,j}:\,\mbox{$1\leq i\leq n$, $1\leq j\leq m$}\right\}\big).
\end{equation}
Moreover, since  $\gcd(a_i,b_j)=1$ for all $1\leq i\leq n$ and $1\leq j\leq m$, the elements of $\{\o\}\cup\left\{\g_{i,j}:\,\mbox{$1\leq i\leq n$, $1\leq j\leq m$}\right\}$ are  minimal and belong to the Hilbert basis $\H(\a,\b)$. Thus,  
\begin{equation}\label{eq:hb2}
{\rm conv}\big( \{\o\}\cup\left\{\g_{i,j}:\,\mbox{$1\leq i\leq n$, $1\leq j\leq m$}\right\}\big) \subseteq \conv\left(\H(\a,\b)\right).
\end{equation}
By combining~\eqref{eq:hb1} and~\eqref{eq:hb2}, we obtain
\[
\conv\left(\H(\a,\b)\right)= {\rm conv}\big( \{\o\}\cup\left\{\g_{i,j}:\,\mbox{$1\leq i\leq n$, $1\leq j\leq m$}\right\}\big).\]
Finally, it follows from the above equality and the minimality of the generators $\g_{i,j}$ and $\o$ that they are the extreme points of $\conv\left(\H(\a,\b)\right)$.
\end{proof}

\bs
Before ending this section, we present a useful way of viewing a solution $(\x,\y)\in \S(\a,\b)$ as a
{\em partition identity}, which is an equality of the form 
\[\underbrace{a_1+\ldots+a_1}_{\mbox{$x_1$ copies}}+\ldots+ \underbrace{a_n+\ldots+a_n}_{\mbox{$x_n$ copies}}= \underbrace{b_1+\ldots+b_1}_{\mbox{$y_1$ copies}}+\ldots+\underbrace{b_m+\ldots+b_m}_{\mbox{$y_m$ copies}}\]
where we skip a term $a_i$ (resp. $b_j$) if $x_i=0$ (resp. $y_j=0$). A partition identity is called {\em primitive} if it does not contain 
a proper nonempty subpartition identity. For instance, $2+2+3=2+5$ is not primitive since it contains the subpartition identity $2+3=5$.  Primitive Partition identities were introduced by Diaconis--Graham--Sturmfels~\cite{DGS}, where their relevance and applications to several areas were demonstrated. In the proof of our main theorem (see Section~\ref{sec:proof}), we sometimes view minimal solutions  in $\H(\a,\b)$ as primitive partition identities. 
%(or \ppi) 

%======================================================
\subsection{Graver Basis}\label{sec:Grv}
%=======================================================

Let $r$ and $k$ be positive integers. For each $\ta=(\ta_1,\ldots,\ta_k)
\in\{-1,1\}^k$, we can associate the following orthant of $\R^k$ 
\[\O^{(\ta)}=\{\x\in \R^k:\; \ta_ix_i\geq0\mbox{ for } 1\leq i\leq k\}.\]
Let $A$ be an $r\times k$ matrix with entries in 
$\Z$ and define the relation $\ssb$ on $D=\{\x\in\Z^k:\; A\x=\o \mbox{ and } \x\not=\o\}$ as follows. For any $\x=(x_1,\ldots,x_k)$ and $\x'=(x'_1,\ldots,x'_k)$ in $D$, we write $\x'\ssb \x$ if these two vectors are in the same orthant of $\R^k$ and $|x'_i|\leq |x_i|$ for $1\leq i\leq k$. For instance $(1,-2,3)\ssb (1,-3,3)$, but $(1,-2,-3)$
and $(2,1,2)$ are not comparable since they live in different orthants of $\R^3$.
We say that $\x\in D$ is $\ssb$-minimal if there is no $\x'\in D$ such that  $\x'\ssb\x$.  
The {\em Graver basis} of $A$, denoted by $\gc(A)$, 
is the set of all $\ssb$-minimal vectors in $D$.

The concept of a Graver basis was introduced by Graver~\cite{Gr} as a method for solving certain classes of linear and integer optimization problems. This has since been extended to a wider class of problems along with polynomial-time (in the size of the inputs) algorithms (e.g., see~\cite[Chapter 3]{DHK}).
If $\H^{(\ta)}$ denote the Hilbert basis of the 
pointed cone $\O^{(\ta)}\cap \{\x\in\R^k:\; A\x=\o\}$, 
then it is well-known that 
\begin{equation}\label{eq:gr}
\gc(A)=\bigcup_{\ta\in\{-1,1\}^n}\H^{(\ta)}\setminus\{\o\}.
\end{equation}
Because of this direct relationship between the Hilbert 
basis and the Graver basis, our main theorem yields  
the following corollary when $A$ has a single row. 
\begin{cor}\label{cor:5} 
Suppose $A$ has a single row, i.e., $A=\bal=(\al_1,\ldots,\al_k)\in\Z^k$.

\n $(i)$  For each orientation $\ta=(\ta_1,\ldots,\ta_k)
\in\{-1,1\}^k$, we have $\H^{(\ta)}\subseteq\conv\left(\F^{(\ta)}\right)$, 
where
\[\F^{(\ta)}=\{\o\}\cup \{\ta_i|\al_j|\e_i+\ta_j|\al_i|\e_{j}:\;1\leq i\not=j\leq k,\;(\ta_i\al_i)(\ta_j\al_j)<0\}.\]

\n $(ii)$  $\gc(\bal)\subseteq\conv\left(\F\right)$, 
where
\[\F=\{\o\}\cup \{\sg|\al_j|\e_i+\de|\al_i|\e_{j}:\;1\leq i\not=j\leq k,\;\sg, \de\in\{-1,1\},\; \sg\al_i\de\al_j<0\}.\]
%In other words, every element of $\gc(\bal)$ is a convex combination of vectors in $\F$.
\end{cor}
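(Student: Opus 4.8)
\medskip
The plan is to derive part $(i)$ from Corollary~\ref{cor:2} by means of a sign-change of coordinates, and then to obtain part $(ii)$ from part $(i)$ together with the decomposition~\eqref{eq:gr} of the Graver basis into Hilbert bases. Throughout I will assume $\al_i\neq0$ for every $i$; an index with $\al_i=0$ decouples from the equation $\bal\cdot\x=\o$ (and contributes $\pm\e_i$ to the Graver basis), so it must be set aside at the outset.

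For part $(i)$, fix an orientation $\ta\in\{-1,1\}^k$ and introduce the sign-flip map $\phi\colon\R^k\to\R^k$, $\phi(\x)=(\ta_1x_1,\ldots,\ta_kx_k)$. This is a lattice automorphism that carries the orthant $\O^{(\ta)}$ onto $\R_{\geq0}^k$; writing $c_i=\ta_i\al_i$ and $\z=\phi(\x)$, it rewrites $\bal\cdot\x=0$ as $\sum_i c_iz_i=0$. Partitioning $\{1,\ldots,k\}$ into $I^+=\{i:c_i>0\}$ and $I^-=\{j:c_j<0\}$, this equation becomes $\sum_{i\in I^+}|\al_i|z_i=\sum_{j\in I^-}|\al_j|z_j$, which is exactly~\eqref{eq:lde} for $\a=(|\al_i|)_{i\in I^+}$ and $\b=(|\al_j|)_{j\in I^-}$ after a coordinate permutation placing $I^+$ before $I^-$. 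Since $\phi$ (composed with that permutation) is an additive bijection sending nonzero vectors to nonzero vectors, it identifies the solution set of $\bal\cdot\x=0$ in $\O^{(\ta)}$ with $\S(\a,\b)$ and preserves the relation ``is a sum of two nonzero solutions''; hence it identifies $\H^{(\ta)}$ with $\H(\a,\b)$. Moreover it sends $\ta_i|\al_j|\e_i+\ta_j|\al_i|\e_j$ to the generator $\g_{i,j}=(b_j\e_i,a_i\e_{n+j})$ (note $b_j=|\al_j|$, $a_i=|\al_i|$), and the requirement $i\in I^+$, $j\in I^-$ reads $(\ta_i\al_i)(\ta_j\al_j)<0$; so $\phi$ identifies $\conv\bigl(\F^{(\ta)}\bigr)$ with $\W(\a,\b)$. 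Transporting the inclusion $\H(\a,\b)\subseteq\W(\a,\b)$ of Corollary~\ref{cor:2} through $\phi$ then yields $\H^{(\ta)}\subseteq\conv\bigl(\F^{(\ta)}\bigr)$.

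For part $(ii)$, I would first check $\F^{(\ta)}\subseteq\F$ for every $\ta$: an element $\ta_i|\al_j|\e_i+\ta_j|\al_i|\e_j$ of $\F^{(\ta)}$ is of the form $\sg|\al_j|\e_i+\de|\al_i|\e_j$ with $\sg=\ta_i$, $\de=\ta_j$ and $\sg\al_i\de\al_j=(\ta_i\al_i)(\ta_j\al_j)<0$, which is the defining condition for $\F$. Hence $\conv\bigl(\F^{(\ta)}\bigr)\subseteq\conv(\F)$, and by~\eqref{eq:gr} together with part $(i)$,
\[
\gc(\bal)=\bigcup_{\ta\in\{-1,1\}^k}\bigl(\H^{(\ta)}\setminus\{\o\}\bigr)\subseteq\bigcup_{\ta\in\{-1,1\}^k}\conv\bigl(\F^{(\ta)}\bigr)\subseteq\conv(\F).
\]

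The only substantive ingredient is Corollary~\ref{cor:2} (equivalently Theorem~\ref{thm:gen}); everything else is bookkeeping. The step requiring the most care is verifying that the sign-flip-plus-permutation map really is an isomorphism of the two cones that respects minimality and matches up the generator sets with $\F^{(\ta)}$, together with the treatment of any vanishing coordinates of $\bal$.
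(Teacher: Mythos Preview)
Your proposal is correct and follows essentially the same route as the paper: both arguments apply the sign-change $z_i=\ta_i x_i$ to reduce the orthant-constrained equation to a nonnegative instance of~\eqref{eq:lde}, invoke the main result (you cite Corollary~\ref{cor:2}, the paper cites Theorem~\ref{thm:gen}, which amounts to the same thing), and then transport the convex-combination statement back; part~$(ii)$ is obtained in both cases from~\eqref{eq:gr} and the containment $\F^{(\ta)}\subseteq\F$. Your explicit remark that indices with $\al_i=0$ must be set aside is a useful caveat the paper does not address.
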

\begin{rmk}
Corollary~\ref{cor:5} has applications in Integer Programming for certain families of knapsack problems. These applications will be discussed elsewhere.
\end{rmk}
We delay the proof until the end of Section~\ref{sec:proof} and consider the following example.

\begin{expl}\label{exp_gr}
Suppose $\al=(1,2,-3)$, then the main equation is 
\begin{equation}\label{eq:exp}
x_1+2x_2-3x_3=0.
\end{equation}
For each orientation $\ta=(\ta_1,\ta_2,\ta_3)\in\{-1,1\}^3$ (which correspond to some orthant or $\R^3$), the equation in~\eqref{eq:exp} over the domain  
\[D^{(\ta)}=\{(x_1,x_2,x_3)\in\Z^3:\; \ta_ix_i\geq 0\mbox{ for $1\leq i\leq 3$}\}\] 
can be solved by first considering the equation 
\[\ta_1z_1+2\ta_2z_2-3\ta_3z_3=0\]
over $\Z_{\geq0}^3$. Then, each solution $(z_1,z_2,z_3)\in \Z_{\geq0}^3$
to this latter equation gives rise to a solution 
$\x=(\ta_1z_1,\ta_2z_2,\ta_3z_3)\in D^{(\ta)}$. We summarize this in Table~\ref{tab_gr} below where we handle $\ta$ and $-\ta$ together as $\pm\ta$ since $\x\in \gc(A)$
implies that $-\x\in\gc(A)$.

\begin{table}[ht]
\caption{Finding the sets $\F^{(\ta)}$ with $\ta=(\ta_1,\ta_2,\ta_3)\in\{-1,1\}^3$.}
\begin{center}
\begin{tabular}{|p{3.6cm}| p{3.1cm} | p{4.9cm} | p{4.5cm} |}
\hline
\mbox{\bf Equation over $\Z_{\geq0}^3$}& \mbox{\bf $\pm\ta=\pm (\ta_1,\ta_2,\ta_3)$} & \mbox{\bf $\z\in \Z_{\geq0}^3$
$\longrightarrow\pm\x\in D^{(\pm\ta)}$}   & \centerline{\mbox{\bf $\F^{(\pm\ta)}$}} \\ \hline 
\hline 	
$z_1+2z_2=3z_3$ &$\pm(1,1,1)$ & $(1,1,1) \to \pm(1,1,1) $ & $\pm \{\o,3\e_1+\e_3,3\e_2+2\e_3\}$ \\ \hline
$z_1+2z_2+3z_3=0$ & $\pm(1,1,-1)$ & \mbox{no nonzero solution in $\Z_{\geq0}^3$} & $\;\;\;\{\o\}$
 \\ \hline
$z_1=2z_2+3z_3$ & $\pm (1,-1,1)$ &  $(5,1,1)\to \pm(5,-1,1)$ & $\pm \{\o,2\e_1-\e_2,3\e_1+\e_3\}$  \\ \hline
$z_1+3z_3=2z_2$ &  $\pm (1,-1,-1)$ &  $\pm (1,2,1)\to \pm (1,-2,-1)$ & $\pm\{\o,2\e_1-\e_2,-3\e_2-2\e_3\}$ \\ \hline
\end{tabular}
\end{center}
\label{tab_gr}
\end{table}
Note that the solutions listed in the third column of Table~\ref{tab_gr} are just examples of solutions 
and they are not necessarily $\ssb$-minimal. Moreover, it follows from the last column of Table~\ref{tab_gr} that
\[\F=\bigcup_{\ta\in\{-1,1\}^n}\F^{(\ta)}=\{\o,\pm(3\e_1+\e_3),\pm(3\e_2+2\e_3),\pm(2\e_1-\e_2)\}.\]
\end{expl}

%======================================================
\subsection{Completely Fundamental Solutions}\label{sec:com}
%=======================================================
A solution $(\x,\y)\in\S(\a,\b)$ is called {\em completely fundamental} if for every decomposition $k(\x,\y)=(\x',\y')+(\x'',\y'')$ with $k\in \Z_{\geq1}$, $(\x',\y')\in\S(\a,\b)$, and $(\x'',\y'')\in\S(\a,\b)$, there exist nonnegative integers $r$ and $s$ such that
\[(\x',\y')=r(\x,\y),\;(\x'',\y'')=s(\x,\y), 
\;\mbox{ and } r+s=k.\]
Let $\S_c(\a,\b)$ denote the set of all completely fundamental solutions in $\S(\a,\b)$. In particular  
$\S_c(\a,\b)\subseteq \H(\a,\b)$.  
Completely fundamental solutions were introduced by Stanley~\cite{St} who used them to characterize  
 a certain generating function associated with 
the Hilbert basis of a system of linear homogeneous Diophantine equations. In the case of a single equation, 
that this paper is concerned with, the generating function in question is as follows:
\begin{equation}\label{genf}
F_{\a,\b}(\z,\w)= F_{\a,\b}(z_1,\ldots,z_n,w_1,\ldots,w_m)=\sum_{(\x,\y)\in\S}z_1^{x_1}\ldots z_n^{x_n}\cdot w_1^{y_1}\ldots w_m^{y_m}=\sum_{(\x,\y)\in\S}\z^{\x}\,\w^{\y},
\end{equation}
where $\S=\S(\a,\b)$, $\x=(x_1,\ldots,x_n)$ and $\y=(y_1,\ldots,y_m)$.
In the next corollary (of Theorem~\ref{thm:gen}), we 
determine the completely minimal solutions in $\S(\a,\b)$. Note that we only use the fact that 
the $\ld_{i,j}$ coefficients in Theorem~\ref{thm:gen} are rational numbers and not that their sum is at most $1$.
\begin{cor}\label{cor:3}
A solution $(\x,\y)\in\S(\a,\b)$ is completely fundamental  if and only if $(\x,\y)$ is a minimal generator; i.e., 
\[
\S_c(\a,\b)=\{\left((b_j/d_{i,j})\e_i,(a_i/d_{i,j})\e_{n+j}\right):\, 1\leq i\leq n,\;1\leq j\leq m\},\]
where $d_{i,j}=\gcd(a_i,b_j)$.
\end{cor}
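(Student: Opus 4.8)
The plan is to prove the two inclusions in the claimed characterization of $\S_c(\a,\b)$ separately, invoking Theorem~\ref{thm:gen} only for the implication ``completely fundamental $\Rightarrow$ minimal generator''; throughout, $\g_{i,j}/d_{i,j}$ denotes the minimal generator $\bigl((b_j/d_{i,j})\e_i,(a_i/d_{i,j})\e_{n+j}\bigr)$. First I would check that each $\g_{i,j}/d_{i,j}$ is completely fundamental, by a short support argument: if $k\cdot(\g_{i,j}/d_{i,j})=(\x',\y')+(\x'',\y'')$ with $k\in\Z_{\geq1}$ and $(\x',\y'),(\x'',\y'')\in\S(\a,\b)$, then the left-hand side vanishes outside coordinates $i$ and $n+j$ and has nonnegative entries, so the same is true of $(\x',\y')$ and $(\x'',\y'')$; and any solution of~\eqref{eq:lde} supported on $\{i,n+j\}$ satisfies $a_ix'_i=b_jy'_j$, hence is a nonnegative integer multiple of $\g_{i,j}/d_{i,j}$. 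Thus $(\x',\y')=r\cdot(\g_{i,j}/d_{i,j})$ and $(\x'',\y'')=s\cdot(\g_{i,j}/d_{i,j})$ with $r,s\in\Z_{\geq0}$, and comparing entries forces $r+s=k$.

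For the reverse inclusion, let $(\x,\y)\in\S_c(\a,\b)$; since $\S_c(\a,\b)\subseteq\H(\a,\b)$, this is a nonzero minimal solution, so Theorem~\ref{thm:gen} supplies nonnegative rationals $\ld_{i,j}$ with $\sum_j\ld_{i,j}b_j=x_i$ and $\sum_i\ld_{i,j}a_i=y_j$. Reading these off coordinate by coordinate gives
\[(\x,\y)=\sum_{i=1}^{n}\sum_{j=1}^{m}\ld_{i,j}\,\g_{i,j},\]
a nonnegative rational combination of the generators (only the rationality of the $\ld_{i,j}$ is needed, not that their sum is at most $1$). The crux is the following claim: \emph{a completely fundamental solution that is written as a positive rational combination of nonzero solutions of~\eqref{eq:lde} has every one of those solutions equal to a positive rational multiple of it.} Applying this to the terms with $\ld_{i,j}>0$, each such $\g_{i,j}$ is a positive multiple of $(\x,\y)$; but two generators that are positive multiples of one another have equal supports and hence coincide, so at most one pair $(i,j)$ has $\ld_{i,j}>0$, and since $(\x,\y)\neq\o$ exactly one does. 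Thus $(\x,\y)$ is a positive rational multiple of a single $\g_{i,j}$, so an integer point of the ray $\{t\,\g_{i,j}:t\geq0\}$, whose integer points are exactly the nonnegative integer multiples of the primitive point $\g_{i,j}/d_{i,j}$. If $(\x,\y)=q\cdot(\g_{i,j}/d_{i,j})$ with $q\geq2$ an integer, then applying the definition of complete fundamentality with $k=1$ to the decomposition of $(\x,\y)$ into $\g_{i,j}/d_{i,j}$ plus $q-1$ copies of it yields a contradiction; hence $q=1$ and $(\x,\y)=\g_{i,j}/d_{i,j}$ is a minimal generator.

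It remains to prove the claim, which is where I expect the only genuine care to be required. Suppose $(\x,\y)$ is completely fundamental and $(\x,\y)=\sum_{t}c_t\u_t$ with finitely many terms, each $c_t>0$ rational and each $\u_t\in\S(\a,\b)\setminus\{\o\}$. If there is a single term the conclusion is immediate; otherwise I would fix an index $t_0$, choose a positive integer $N$ clearing all the denominators, and write $N(\x,\y)=(Nc_{t_0})\u_{t_0}+\sum_{t\neq t_0}(Nc_t)\u_t$. One then checks that both summands lie in $\S(\a,\b)\setminus\{\o\}$ — because $\S(\a,\b)$ is closed under addition and under scaling by positive integers and each $Nc_t\in\Z_{\geq1}$ — so the definition of complete fundamentality with $k=N$ gives $(Nc_{t_0})\u_{t_0}=r(\x,\y)$ for some integer $r\geq0$, necessarily $r\geq1$, whence $\u_{t_0}=\bigl(r/(Nc_{t_0})\bigr)(\x,\y)$. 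As $t_0$ was arbitrary, every $\u_t$ is a positive rational multiple of $(\x,\y)$, proving the claim. The main obstacle is therefore not conceptual but the bookkeeping in this denominator-clearing and splitting step — keeping every auxiliary vector inside $\S(\a,\b)$ and away from $\o$ — while the primitivity check and the support arguments above are routine.
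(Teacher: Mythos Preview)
Your proof is correct and follows precisely the line the paper indicates: the paper does not spell out a proof of this corollary but remarks that one only needs the rationality of the $\ld_{i,j}$ from Theorem~\ref{thm:gen}, not the bound $\sum\ld_{i,j}\leq1$, and that is exactly what you use. Your denominator-clearing argument for the claim, the support argument showing that at most one generator can appear, and the primitivity check forcing $q=1$ are all standard and sound; the paper's omitted proof would proceed the same way.
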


We can now deduce the following corollary from Corollary~\ref{cor:3} and a result of Stanley~\cite[Theorem~$2.5$]{St}.
\begin{cor}\label{cor:4}
If $\S_c=\S_c(\a,\b)$ be the set of all completely fundamental solutions in $\S(\a,\b)$, then 
$F_{\a,\b}(\z,\w)$ is a rational function with denominator 
\[D_{\a,\b}(\z,\w)=\prod_{(\x,\y)\in\S_c}(1-\z^{\x}\,\w^{\y})=\prod_{\left((b_j/d_{i,j})\e_i,(a_i/d_{i,j})\e_{n+j}\right)\in\S_c}\left(1-z_i^{b_j/d_{i,j}}\,w_j^{a_i/d_{i,j}}\right).\]
\end{cor}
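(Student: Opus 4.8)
The plan is to deduce this directly from Stanley's rationality theorem together with the explicit description of $\S_c(\a,\b)$ obtained in Corollary~\ref{cor:3}. Observe first that $\S=\S(\a,\b)$ is exactly the monoid of nonnegative integer points of the \emph{pointed} rational cone $\{(\x,\y)\in\R_{\geq0}^n\times\R_{\geq0}^m:\a\cdot\x=\b\cdot\y\}$ attached to the single linear homogeneous Diophantine equation $\a\cdot\x-\b\cdot\y=0$ in $n+m$ unknowns. This is precisely the setting of \cite[Theorem~$2.5$]{St}, which asserts that the generating function $F_{\a,\b}(\z,\w)$ of~\eqref{genf} is a rational function that can be written with denominator $\prod_{(\x,\y)\in\S_c}\big(1-\z^{\x}\,\w^{\y}\big)$, the product running over the set $\S_c=\S_c(\a,\b)$ of completely fundamental solutions.

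It then remains only to substitute the list of completely fundamental solutions. By Corollary~\ref{cor:3},
\[\S_c(\a,\b)=\{\left((b_j/d_{i,j})\e_i,(a_i/d_{i,j})\e_{n+j}\right):\, 1\leq i\leq n,\;1\leq j\leq m\},\]
where $d_{i,j}=\gcd(a_i,b_j)$, and for the completely fundamental solution $(\x,\y)=\left((b_j/d_{i,j})\e_i,(a_i/d_{i,j})\e_{n+j}\right)$ one has $\z^{\x}\,\w^{\y}=z_i^{b_j/d_{i,j}}\,w_j^{a_i/d_{i,j}}$. Plugging these monomials into the product furnished by Stanley's theorem yields exactly the asserted denominator
\[D_{\a,\b}(\z,\w)=\prod_{1\leq i\leq n,\;1\leq j\leq m}\left(1-z_i^{b_j/d_{i,j}}\,w_j^{a_i/d_{i,j}}\right),\]
and the corollary follows.

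Since the deduction above is a two–line argument once both ingredients are in hand, the only point requiring attention is to confirm that the hypotheses of \cite[Theorem~$2.5$]{St} are met in our situation and that its conclusion is the denominator form we quoted. The cone is pointed (being contained in the pointed orthant $\R_{\geq0}^{n+m}$), its lattice-point monoid is finitely generated with Hilbert basis $\H(\a,\b)$, and the completely fundamental solutions coincide with the primitive lattice vectors on its one-dimensional faces; these are exactly the assumptions under which Stanley expresses $F_{\a,\b}(\z,\w)$ as $P(\z,\w)/D_{\a,\b}(\z,\w)$ for a polynomial numerator $P$. I do not anticipate any genuine difficulty beyond this bookkeeping and the correct quotation of Stanley's result.
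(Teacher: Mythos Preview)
Your proposal is correct and is essentially the same as the paper's own argument: the paper presents Corollary~\ref{cor:4} with no separate proof, simply stating that it follows from Corollary~\ref{cor:3} together with Stanley's Theorem~2.5, which is exactly the two-ingredient deduction you wrote out. Your additional remarks verifying that Stanley's hypotheses are met (pointedness of the cone, identification of completely fundamental solutions with primitive lattice points on extreme rays) are welcome elaborations, but the core approach is identical.
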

%
%======================================================
\section{Proof of the main results}\label{sec:proof}
%=======================================================
In this section, we prove Theorem~\ref{thm:gen}, Corollary~\ref{cor:2}, and Corollary~\ref{cor:5}.
\begin{proof}[Proof of Theorem~\ref{thm:gen}]
Fix the coefficients $\a$ and $\b$ and consider a (nonzero) minimal solution $(\x,\y)\in \H(\a,\b)$. The proof is by induction on $ ||\x||_1+ ||\y||_1$, which is at least $2$.
We first argue that one may assume (without loss of generality) that $\x$ and $\y$ are binary vectors. To see this, first 
note that from~\eqref{eq:Lam} implies  $x_i\leq ||\x||_1\leq \max_{1\leq j\leq m} b_j$ for $1\leq i\leq n$, and $y_j\leq ||\y||_1\leq \max_{1\leq i\leq n} a_i$ for $1\leq j\leq m$. For each $i$ such that $x_i\geq1$, let   
$a'_{i,k}=a_i$ for $1\leq k\leq x_i$ (i.e., we create $x_i$ copies of $a_i$), and for each $j$ such that $y_j\geq 1$,  let $b'_{j,s}=b_j$ for $j\leq s\leq y_j$. Now define $p:=\sum_{i=1}^nx_i= ||\x||_1$, $q:=\sum_{j=1}^my_j= ||\y||_1$, 
\[\a'=(a'_{1,1},\ldots,a'_{1,x_1},\ldots,a'_{n,1},\ldots,a'_{n,x_n})\in\Z_{\geq0}^{p},\] 
and 
\[\b'=(b'_{1,1},\ldots,b'_{1,y_1},\ldots,b'_{m,1},\ldots,b'_{m,y_m})\in\Z_{\geq0}^{q}.\]
Then the pair $\left((1,\ldots,1),(1,\ldots,1)\right)\in\Z_{\geq0}^{p+q}$ is a minimal nonnegative solution of the Diophantine equation 
\begin{equation}\label{eq:lde2}
\begin{cases}
\displaystyle{\sum_{i=1}^n\sum_{k=1}^{x_i}x'_{i,k}a'_{i,k}=
\sum_{j=1}^m\sum_{\ell=1}^{y_j}y'_{j,\ell}b'_{i,k}}\\
x'_{i,k},y'_{j,\ell}\in\{0,1\},
\end{cases}
\end{equation}
which is equivalent to the solution $(\x,\y)$ that we started with. 
Since $||\x||_1+||\y||_1$ will decrease at each step of the inductive step, the transformed dimension $p+q=||\x||_1+||\y||_1$ 
is also decreasing. Moreover, as we shall see, the maximum entry in any generator $\g_{i,j}$, used in the convex combination of $(\x,\y)$, will not increase at any stage of the inductive step because the coefficients (originally $\a$ and $\b$) will not increase.
Thus, it suffices to prove the theorem for linear Diophantine equations with binary variables. In particular, we may assume, without loss of generality, that for any minimal solution $(\x,\y)\in\H(\a,\b)$, we have 
$x_i=1$ if $i\in{\rm supp}(\x)$ and $x_i=0$ if $i\not\in{\rm supp}(\x)$ (and similarly for $y_j$). 
Let $s$ and $t$ be such that 
\[a_s=\max_{i\in {\rm supp}(\x)} a_i \mbox { and } b_t=\max_{j\in {\rm supp}(\y)}b_j.\]

For the base case, let $||\x||_1+||\y||_1=2$. Then  $||\x||_1=||\y||_1=1$,  ${\rm supp}(\x)=\{s\}$, ${\rm supp}(\y)=\{t\}$, $a_s=b_t$, and $(\x,\y)=(\e_s,\e_t)$. If we set $\ld_{s,t}=\frac{1}{a_s}=\frac{1}{b_t}$ and $\ld_{i,j}=0$ for $(i,j)\not=(s,t)$,
then $\ld_{s,t}\leq 1$ and the required constraints
in~\eqref{eq:alij} are clearly satisfied.   
 
Next, assume that $||\x||_1+||\y||_1>2$. Then, it follows from the minimality of $(\x,\y)$ that $a_s\not=b_t$. Without loss of generality, assume that $a_s>b_t$. Let $a_{n+1}=a_s-b_t$ (thus, the new coefficient $a_{n+1}$ is decreasing) and consider the vectors $\x'=(x'_1,\ldots,x'_{n+1})$ and $\y'=(y'_1,\ldots,y'_m)$ given by 
\begin{equation}\label{eq:set}
\begin{cases}
\mbox{$x'_{n+1}=1$, $x'_s=0$,  $x_i'=x_i$ for $1\leq i\leq n$ and $i\not=s$},\\
\mbox{$y'_t=0,\;y_j'=y_j$ for $1\leq j\leq m$ and $j\not=t$}.
\end{cases}
\end{equation}
Then it can easily be seen that $(\x',\y')$ is also a minimal solution of the Diophantine equation 
\[
\begin{cases}
z_1a_1+\ldots+ z_{n+1}a_{n+1}=w_1b_1+\ldots +\ldots+w_{m}b_{m}\\
z_i,w_j\in\{0,1\}
\end{cases}
\]
Moreover, we have
\[{\rm supp}(\x')=\left({\rm supp}(\x)\setminus\{s\}\right)\cup\{n+1\},\; {\rm supp}(\y')={\rm supp}(\y)\setminus\{t\},\;  ||\x'||_1=||\x||_1,\mbox{ and }  ||\y'||_1=||\y||_1-1.\]
Hence, $2\leq ||\x'||_1+||\y'||_1<||\x||_1+||\y||_1$, and it follows from the induction hypothesis that there exist nonnegative rational numbers $\ld'_{i,j}$ such that, 
\begin{equation}\label{eq:IH1}
\ld'_{i,j}=0,\,\mbox{ if }(i,j)\not\in {\rm supp}(\x')\times{\rm supp}(\y'),
\end{equation}
\begin{equation}\label{eq:IH2}
\begin{cases}
\displaystyle{\sum_{j=1}^m\ld'_{i,j}b_j=x'_i},\, &\mbox{ if $1\leq i\leq n+1$}\\
\displaystyle{\sum_{i=1}^{n+1}\ld'_{i,j}a_i=y'_j},\, &\mbox{ if $1\leq j\leq m$}.
\end{cases}
\end{equation}
\begin{equation}\label{eq:IH3}
\displaystyle{\sum_{i=1}^{n+1}\sum_{j=1}^m \ld'_{i,j}\leq 1}.
\end{equation}
Since $t\not\in{\rm supp}(\y')$, it follows from~\eqref{eq:IH1} that $\ld'_{n+1,t}=0$. This fact, together with~\eqref{eq:set} and~\eqref{eq:IH2} imply that
\begin{align}\label{eq:lb1}
\sum_{j=1,\,j\not=t}^m\ld'_{n+1,j}b_j=\sum_{j=1}^m\ld'_{n+1,j}b_j=x'_{n+1}=1.
\end{align}
Since $b_t=\max\limits_{j\in{\rm supp}(\y)}b_j\geq1$ and ${\rm supp}(\y')\subseteq {\rm supp}(\y)$, it follows from~\eqref{eq:lb1} that 
\begin{align}\label{eq:lb2}
&b_t\sum_{j=1,\,j\not=t}^m\ld'_{n+1,j}\geq \sum_{j=1,\,j\not=t}^m\ld'_{n+1,j}b_j =1\cr
&\Longrightarrow \sum_{j=1,\,j\not=t}^m\ld'_{n+1,j}\geq \frac{1}{b_t}.
\end{align}
\bs
We now define $\{\ld_{i,j}\}_{i=1,j=1}^{n,m}$ as follows:
\begin{equation}\label{eq:al}
\begin{cases}
\ld_{s,t}\coloneqq\frac{1}{a_s}\\
\ld_{s,j}\coloneqq\frac{a_{n+1}}{a_{n+1}+b_t}\cdot \ld'_{n+1,j} =\frac{a_s-b_t}{a_s}\cdot \ld'_{n+1,j},
&\mbox{ if $1\leq j\leq m$ and $j\not=t$}\\
\ld_{i,t}\coloneqq0=\ld'_{i,t}\mbox{ (since $t\not\in{\rm supp(\y')}$)}, &\mbox{ if $1\leq i\leq n$ and $i\not=s$}\\
\ld_{i,j}\coloneqq\ld'_{i,j}, &\mbox{ if $1\leq i\leq n$, $1\leq j \leq m$, $i\not=s$, and $j\not=t$}.
\end{cases} 
\end{equation}
Thus, the $\ld_{i,j}$'s are nonnegative rational numbers.  It also follows from~\eqref{eq:set},~\eqref{eq:IH1}, and~\eqref{eq:al} that 
\begin{equation}\label{eq:non_supp}
\ld_{i,j}=0,\,\mbox{ if }(i,j)\not\in {\rm supp}(\x)\times{\rm supp}(\y).
\end{equation} 
By induction hypotheses in~\eqref{eq:IH1}--\eqref{eq:IH2} and the definition of $\ld_{i,j}$ 
in~\eqref{eq:al}, it follows that if $1\leq i\leq n$ and $i\not=s$, then 
\[
\sum_{j=1}^m\ld_{i,j}b_j=\sum_{j=1}^m\ld'_{i,j}b_j=x'_i=x_i.
\]
If $1\leq j\leq m$ and $j\not=t$, we have 
\begin{align*}
\sum_{i=1}^{n}\ld_{i,j}a_i
&=\sum_{i=1,\,i\not=s}^{n}\ld_{i,j}a_i+\ld_{s,j}a_s\\
&=\sum_{i=1,i\not=s}^{n}\ld'_{i,j}a_i+\left(\frac{a_s-b_t}{a_s}\right)\ld'_{n+1,j}a_s\quad\mbox{(by~\eqref{eq:al})} \\
&=\sum_{i=1,i\not=s}^{n}\ld'_{i,j}a_i+\ld'_{n+1,j}(a_s-b_t)\\
&=\sum_{i=1,i\not=s}^{n}\ld'_{i,j}a_i+\ld'_{s,j}a_s+\ld'_{n+1,j}a_{n+1}\quad\mbox{(since $\ld'_{s,j}=0$ and $a_{n+1}=a_s-b_t$)} \\
&=\sum_{i=1}^{n+1}\ld'_{i,j}a_i\\
&=y'_j=y_j\quad\mbox{(by~\eqref{eq:IH2} and~\eqref{eq:set}).}
\end{align*}
Moreover,~\eqref{eq:IH1} and~\eqref{eq:al} yield
\[
\sum_{i=1}^{n}\ld_{i,t}a_i=\ld_{s,t}a_s=\frac{1}{a_s}\cdot a_s=1=y_t,
\]
and
\begin{align*}
\sum_{j=1}^m\ld_{s,j}b_j
&=\sum_{j=1,\,j\not=t}^m\ld_{s,j}b_j+\ld_{s,t}\cdot b_t\cr
&=\sum_{j=1,\,j\not=t}^m\left(\frac{a_s-b_t}{a_s}\cdot \ld'_{n+1,j}\right)b_j+\ld_{s,t}\cdot b_t\quad
\mbox{(by~\eqref{eq:al})} \cr
&=\frac{a_s-b_t}{a_s}\sum_{j=1}^m\ld'_{n+1,j}\cdot b_j+\frac{b_t}{a_s} \cr
&=\frac{a_s-b_t}{a_s}\,x'_{n+1}+\frac{b_t}{a_s}\quad\mbox{ (by~\eqref{eq:IH2})}\cr
&=1=x_s\quad\mbox{ (since $x'_{n+1}=1$ by~\eqref{eq:set})}.
\end{align*}
Still using~\eqref{eq:IH1}, \eqref{eq:al}, and~\eqref{eq:non_supp}, we infer that 
\begin{align}\label{eq:lb3}
&\sum_{i=1}^n\sum_{j=1}^m \ld_{i,j}-\sum_{i=1}^{n+1}\sum_{j=1}^m \ld'_{i,j}\cr
&=\left(\sum_{i=1,\,i\not=s}^n\ld_{i,t}-\sum_{i=1,\,i
\not=s}^n\ld'_{i,t}-\ld'_{n+1,t}\right)+\left(\sum_{j=1,
\,j\not=t}^m\ld_{s,j}-\sum_{j=1,\,j\not=t}^m\ld'_{n+1,j}
\right)+(\ld_{s,t}-\ld'_{s,t})\cr
&=\sum_{j=1,\,j\not=t}^m\ld_{s,j}+\ld_{s,t}-\sum_{j=1,\,j\not=t}^m\ld'_{n+1,j}\quad\mbox{(since $\ld'_{s,t}=0$, $\ld'_{n+1,t}=0$, and $\ld_{i,t}=\ld'_{i,t}$ if $i\not=s$)}\cr 
&= \sum_{j=1,\,j\not=t}^m\left(\frac{a_s-b_t}{a_s}\right)\ld'_{n+1,j}+\frac{1}{a_s}-\sum_{j=1,\,j\not=t}^m\ld'_{n+1,j}\cr
&=\frac{1}{a_s}\left(1-b_t\sum_{j=1,\,j\not=t}^m\ld'_{n+1,j}\right)\cr
&\leq 0,
\end{align}
where the last inequality follows from~\eqref{eq:lb2}.

Finally, it follows from~\eqref{eq:IH3} and~\eqref{eq:lb3} that 
\[\sum_{i=1}^n\sum_{j=1}^m \ld_{i,j}\leq \sum_{i=1}^{n+1}\sum_{j=1}^m \ld'_{i,j}\leq 1,\]
which completes the inductive step.
\end{proof}
%====================================================
\begin{proof}[Proof of Corollary~\ref{cor:2}]
This directly follows from the inductive proof of Theorem~\ref{thm:gen}. In particular, the inductive definition 
 in~\eqref{eq:IH1} and~\eqref{eq:non_supp} shows that at each step, $\ld_{i,j}\not=0$ for 
 exactly one new pair, namely $(i,j)=(s,t)$.  Then, as illustrated in Example~\ref{exp2}, we can switched back from the  transformed binary solution to the original (not necessarily binary) solution by combining all the generators in the binary solution that correspond to nonzero entry pairs from the original solution.
 
Alternatively\footnote{This alternative argument, which is more suited to the geometric theme of the paper,  was suggested by a referee of this paper.},  one can triangulate $\W(\a,\b)$ into $(n+m-1)$-dimensional simplices, all of which containing the vertex $\o$. Then it follows from Theorem~\ref{thm:gen} that each minimal solution belongs to one of those simplices. This yields the $m+n-1$ upper bound on the number of nonzero generators (vertices) used to represent a minimal solution as a convex combination of generators.
 
\end{proof}
%======================================
\begin{proof}[Proof of Corollary~\ref{cor:5}]

To prove $(i)$, we proceed as in Example~\ref{exp2} in Section~\ref{sec:Grv}. Let $A=\bal=(\al_1,\ldots,\al_k)\in\Z^k$,  it suffices to prove 
that for each orientation $\ta=(\ta_1,\ldots,\ta_k)
\in\{-1,1\}^k$ and for any $\x\in \H^{(\ta)}$, we have 
$\x$ is a linear combination of vectors in the set 
\[\{\o\}\cup \{\ta_i|\al_j|\e_i+\ta_j|\al_i|\e_{j}:\;1\leq i\not=j\leq k,\;(\ta_i\al_i)(\ta_j\al_j)<0\}.\]

For any $\x=(x_1,\ldots,x_k)\in\H^{(\ta)}$, we have 
\begin{equation}\label{eq:prf_gr1}
\al_1x_1+\ldots+\al_k x_k=0.
\end{equation}
Since $\x$ is in the orthant $\O^{(\ta)}$, it follows that 
$\ta_ix_i\geq0$ for $1\leq i\leq k$. Since $\ta_i^2=1$ for $1\leq i\leq k$, we have
\[\al_1x_1+\ldots+\al_k x_k=0 \Longleftrightarrow \al_1\ta_1(\ta_1x_1)+\ldots+\al_k\ta_k (\ta_kx_k)=0.
\]
Thus, finding the solution $\x$ of ~\eqref{eq:prf_gr1} is equivalent to first find a nonnegative solution $\z=(z_1,\ldots,z_k)$ to the equation 
\begin{equation*}\label{eq:prf_gr2}
\ta_1\al_1z_1+\ldots+\ta_k\al_k z_k=0,
\end{equation*}
and then setting $x_i=\ta_iz_i$ for $1\leq i\leq k$. Moreover, the solution $\x\in\Z^k$ is $\ssb$-minimal if and only if 
the corresponding solution $\z\in \Z_{\geq0}^m$ is minimal. Thus, it follows from Theorem~\ref{thm:gen} that $\z$ is a convex
 combination of $\o$ and the generators of the equation in~\eqref{eq:prf_gr1}. By definition, these generators are the elements of the set
\[\{|\ta_j\al_j|\e_i+|\ta_i\al_i|\e_{j}:\;1\leq i\not=j\leq k,\;(\ta_i\al_i)(\ta_j\al_j)<0\}.\]
Since $\ta_i\in\{-1,1\}$ for $1\leq i\leq k$, we have $|\ta_i\al_i|=|\al_i|$, and $|\ta_j\al_j|=|\al_j|$. Thus,  since  $x_i=\ta_i z_i$ for $1\leq i\leq k$, it follows that $\x$ is a convex
combination of $\o$ and generators from the set 
\[\{\ta_i|\al_j|\e_i+\ta_j|\al_i|\e_{j}:\;1\leq i\not=j\leq k,\;(\ta_i\al_i)(\ta_j\al_j)<0\},\]
which proves $(i)$. 

To prove $(ii)$, we use part $(i)$ to obtain $\gc(\al)\subseteq \conv(\F)$, where 
\[\F=\bigcup_{\tau\in\{-1,1\}^n}\F^{(\tau)}=\{\o\}\cup \{\sg|\al_j|\e_i+\de|\al_i|\e_{j}:\;1\leq i\not=j\leq k,\;\ta, \de\in\{-1,1\},\; \sg\de\al_i\al_j<0\}\]
\end{proof}

%======================================================
\section{Algorithm for computing the $\ld_{i,j}$ in Theorem~\ref{thm:gen} via an Example}
%=======================================================
The proof of Theorem~\ref{thm:gen} readily provides an algorithm for computing the coefficients $\ld_{i,j}$ in~\eqref{eq:alij} that  correspond to a given solution $(\x,\y)$. We illustrate the algorithm in Example~\ref{exp2} below.
\begin{expl}\label{exp2}
Consider the minimal solution  $(\x,\y)=\left((2),(2,1,1)\right)$ of the linear Diophantine equation $6x_1=2y_1+3y_2+5y_3$ from Example~\ref{exp1}. Then the corresponding (binary) linear Diophantine equation is
\[
\begin{cases}
6x_1+6x_2=2y_1+2y_2+3y_3+5y_4\\
x_i,\,y_j\in\{0,1\},
\end{cases}
\]
 with corresponding binary solution vectors $((1,1),(1,1,1,1))$. 

\begin{table}[ht]
\caption{Finding $a_s=\max_ia_i$ and $b_t=\max_jb_j$ at each level~$k$}
\begin{center}
\begin{tabular}{ |p{3.2cm}| p{5cm}| p{1.5 cm} | p{3.1cm} |}
\hline
\mbox{\bf Solution $(\x,\y)$} &\mbox{\bf Partition Identity} &\mbox{\bf Level $k$} & \mbox{\bf $a_s,\;b_t$ } \\  \hline 
\hline 	
 $\left((1,1),(1,1,1,1)\right)$ & $6+6=2+2+3+5$ & $1$ & $a_2=6,\;b_4=5$ \\ \hline
$\left((1,1),(1,1,1)\right)$ & $6+1=2+2+3$ & $2$ & $a_1=6,\;b_3=3$ \\ \hline
$\left((1,1),(1,1)\right)$ & $3+1=2+2$ & $3$ & $a_1=3,\;b_2=2$ \\ \hline
$\left((1,1),(1)\right)$ & $1+1=2$ & $4$ & $a_2=1,\;b_1=2$ \\ \hline
$\left((1),(1)\right)$ & $1=1$& $5$ & $a_1=1,\;b_1=1$\\ \hline
\end{tabular}
\end{center}
\label{tab1}
\end{table}
\begin{table}[ht]
\caption{Finding $\ld_{i,j}$ at level $k$ in reverse ordering.}
\begin{center}
\begin{tabular}{|p{4.0cm}| p{1.5cm} | p{1.5cm} | p{7.7cm} |}
\hline
\mbox{\bf Solution/Partition}  &\mbox{\bf Level $k$}& \mbox{\bf $a_s,\;b_t$}  &\mbox{\bf  $\quad\ld^{(k)}_{i,j}:=\ld_{i,j}$ at level $k$} \\  \hline 
\hline 	
$\left((1),(1)\right)$\vfill 
$1=1$ & $5$ & $a_1=1,\;\;$  $b_1=1$ & $\ld^{(5)}_{1,1}=\frac{1}{a_1}=1.$ \\ \hline
$\left((1,1),(1)\right)$ \vfill
$1+1=2$ 
& $4$  &  $a_2=1,\;\;$  $b_1=2$ & $\ld^{(4)}_{2,1}=\frac{1}{b_1}=\frac{1}{2}$; $\ld^{(4)}_{1,1}=\frac{b_1-a_2}{b_1}\cdot\ld^{(5)}_{1,1}=\frac{1}{2}.$ \\ \hline
$\left((1,1),(1,1)\right)$ \vfill
$3+1=2+2$ & $3$ &  $a_1=3,\;\;$ $b_2=2$ &    $\ld^{(3)}_{1,2}=\frac{1}{a_1}=\frac{1}{3}$; $\ld^{(3)}_{1,1}=\frac{a_1-b_2}{a_1}\cdot\ld^{(4)}_{1,1}=\frac{1}{6}$; $\qquad$  
$\ld^{(3)}_{2,1}=\ld^{(4)}_{2,1}=\frac{1}{2}$; $\ld^{(3)}_{2,2}=0$.  
\\ \hline
$\left((1,1),(1,1,1)\right)$\vfill
$6+1=2+2+3$ & $2$ &  $a_1=6,\;\;$ $b_3=3$ &  $\ld^{(2)}_{1,3}=\frac{1}{a_1}=\frac{1}{6}$; 
$\ld^{(2)}_{1,2}=\frac{a_1-b_3}{a_1}\cdot\ld^{(3)}_{1,2}=\frac{1}{6}$; $\qquad$  
$\ld^{(2)}_{1,1}=\frac{a_1-b_3}{a_1}\cdot\ld^{(3)}_{1,1}=\frac{1}{12}$; $\ld^{(2)}_{2,1}=\ld^{(3)}_{2,1}=\frac{1}{2}$; $\ld^{(2)}_{2,2}=0$; $\ld^{(2)}_{2,3}=0$.  \\\hline
$\left((1,1),(1,1,1,1)\right)$\vfill
$6+6=2+2+3+5$ & $1$ &  $a_2=6,\;\;$ $b_4=5$ &  $\ld^{(1)}_{2,4}=\frac{1}{a_2}=\frac{1}{6}$; $\ld^{(1)}_{2,3}=\frac{a_2-b_4}{a_2}\cdot\ld^{(2)}_{2,3}=0$; $\qquad$
$\ld^{(1)}_{2,2}=\frac{a_2-b_4}{a_2}\cdot\ld^{(2)}_{2,2}=0$; $\ld^{(1)}_{2,1}=\frac{a_2-b_4}{a_2}\cdot\ld^{(2)}_{2,1}=\frac{1}{12}$; $\ld^{(1)}_{1,1}=\ld^{(2)}_{1,1}=\frac{1}{12}$; $\ld^{(1)}_{1,2}=\ld^{(2)}_{1,2}=\frac{1}{6}$; 
  $\qquad\quad$ 
 $\ld^{(1)}_{1,3}=\ld^{(2)}_{1,3}=\frac{1}{6}$;
 $\ld^{(1)}_{1,4}=0$. \\\hline
\end{tabular}
\end{center}
\label{tab2}
\end{table}

\n By using the last row (and last column) of Table~\ref{tab2}, and setting $\a=(a_1,a_2)=(6,6)$ and 
$\b=(b_1,b_2,b_3,b_4)=(2,2,3,5)$, we verify that
\begin{align}\label{eq:f1}
((1,1),(1,1,1,1))
&=\sum_{i=1}^2\sum_{j=1}^4\ld^{(1)}_{i,j}\cdot \g_{i,j}\cr
&={\tiny\frac{1}{12}}\,\g_{1,1}+{\small\frac{1}{6}}\, \g_{1,2}+{\small\frac{1}{6}}\,\g_{1,3}+0\,\g_{1,4}+{\small\frac{1}{12}}\, \g_{2,1}+0,\g_{2,2}+0\,\g_{2,3}+{\small\frac{1}{6}}\,\g_{2,4}\cr
&=\frac{1}{12}\, ((2,0),(6,0,0,0))+\frac{1}{6}\, ((2,0),(0,6,0,0))+\frac{1}{6}\, ((3,0),(0,0,6,0))+\cr
&\quad\qquad\qquad\qquad\qquad\qquad\qquad\frac{1}{12}\, ((0,2),(6,0,0,0))+\frac{1}{6}\,((0,5),(0,0,0,6)).
\end{align}
We can also recover a linear combination for the non-binary solution  $((2),(2,1,1))$ to the original equation $x_1=2y_1+3y_2+5y_3$ as follows. From the binary solution 
$((1,1),(1,1,1,1))$, we recover the original solution 
as $((1+1),(1+1,1,1))=((2),(2,1,1))$, we perform corresponding ``moves'' on the pairs of vectors (generators) that appear in~\eqref{eq:f1}, i.e., add the first two coordinates of the first vector
in the solution-pair, and then add the last two coordinates of the second vector in the solution-pair. We summarize this in the following table.

\begin{table}[ht]
\caption{Recovering the linear combination of original solution}
\begin{center}
\begin{tabular}{ |p{5.8cm}| p{8.7cm} |}
\hline
\mbox{\bf Generator for binary case} &\mbox{\bf Generator for original (non-binary) case}\\  \hline 
\hline 	
$((2,0),(6,0,0,0))$ & $((2+0),(6+0,0,0))=((2),(6,0,0))$ \\ \hline
$((3,0),(0,6,0,0))$ & $((3+0),(0,6+0,0))=((3),(0,6,0))$ \\ \hline
$((0,2),(6,0,0,0))$ & $((0+2),(6+0,0,0))=((2),(6,0,0))$ \\ \hline
$((0,5),(0,0,0,6))$ & $((0+5),(0,0,0+6))=((5),(0,0,6))$ \\ \hline
\end{tabular}
\end{center}
\label{tab3}
\end{table}

From~\eqref{eq:f1} and Table~\ref{tab3}, we obtain 
\begin{align*}\label{eq:f2}
((2),(2,1,1))
&=((1+1),(1+1,1,1))\cr
&=\frac{1}{12}\cdot ((6),(2,0,0))+\frac{1}{6}\cdot ((3),(0,6,0))+\frac{1}{6}\cdot((2),(6,0,0))+\cr
&\qquad\qquad\qquad\qquad\qquad\qquad\qquad\frac{1}{12}\cdot ((2),(6,0,0))+\frac{1}{6}\cdot((5),(0,0,6))\cr
&=\frac{1}{3}\cdot ((2),(6,0,0))+
\frac{1}{6}\cdot((3),(0,6,0))+\frac{1}{6}\cdot ((5),(0,0,6)).
\end{align*}

\end{expl}

\bs\n{\bf Acknowledgement:}\
We are indebted to an anonymous referee for valuable suggestions which helped improve the presentation of the paper and correct an error in the previous version of Corollary~\ref{cor:2}.
%==========================================================================

%===============================================================================
\end{document}